\documentclass[12pt,reqno]{amsart}
\usepackage{fullpage, times, amssymb, amsthm, amsmath, amsfonts}
\usepackage{graphicx}
\usepackage{times}
\usepackage{enumitem}
\usepackage{mathrsfs}
\usepackage{mathtools}
\usepackage{xfrac}
\usepackage{float}
\usepackage[all,cmtip]{xy}
\usepackage[usenames,dvipsnames]{xcolor}
\usepackage[margin=1.5cm]{geometry}
\usepackage{hyperref}
\hypersetup{colorlinks=true, citecolor=blue, linkcolor=blue, urlcolor=blue, pdfstartview=FitH, pdfauthor=Bober-Goldmakher, pdftitle=Converse to a theorem of Gauss}

\usepackage{tikz}
\usetikzlibrary{matrix,arrows,decorations.pathmorphing}

\pdfpagewidth=8.5in
\pdfpageheight=11in

\begin{document}

\pagestyle{empty}

\parskip0pt
\parindent10pt

\newenvironment{answer}{\color{Blue}}{\color{Black}}
\newenvironment{exercise}{\color{Blue}
\begin{exr}}{\end{exr}\color{Black}}

\newtheorem{theorem}{Theorem}[section]
\newtheorem{prop}[theorem]{Proposition}
\newtheorem{lemma}[theorem]{Lemma}
\newtheorem{cor}[theorem]{Corollary}
\newtheorem{conj}[theorem]{Conjecture}
\newtheorem{defn}[theorem]{Definition}

\newtheorem*{claim}{Claim}
\newtheorem*{MainThmv2}{Theorem \ref{thm:MainThm}, v2.0}

\newtheorem{exr}[theorem]{Exercise}
\newtheorem{example}[theorem]{Example}

\theoremstyle{remark}
\newtheorem*{rmk}{Remark}

\newtheorem*{question}{Question}

\newtheorem*{FundQuest}{The Fundamental Question}
\newtheorem*{FundQuestSeries}{Fundamental Question (for Fourier Series)}
\newtheorem*{FundQuestTransform}{Fundamental Question (for Fourier Transform)}
\newtheorem*{FundQuestGroup}{Fundamental Question (for Fourier Transform on Finite Abelian Groups)}

\makeatletter
\newcommand{\oset}[3][0ex]{%
  \mathrel{\mathop{#3}\limits^{
    \vbox to#1{\kern-2\ex@
    \hbox{$\scriptstyle#2$}\vss}}}}
\makeatother

\renewcommand{\mod}[1]{{\ifmmode\text{\rm\ (mod~$#1$)}\else\discretionary{}{}{\hbox{ }}\rm(mod~$#1$)\fi}}

\newcommand{\act}[2]{{\ifmmode\text{$#1$\ \rotatebox[origin=c]{-90}{$\circlearrowright$}\ $#2$}}}

\DeclareRobustCommand{\leg}
{\genfrac(){0.5pt}{}}

\newcommand{\dtr}{\oset[-.7ex]{\cdot}{-}}
\newcommand{\isom}{\oset
    {\widetilde{\phantom{\longrightarrow}}}
    {\longrightarrow}}

\newcommand*{\dblast}
    {\raisebox{-0.1ex}{*}\llap{\raisebox{-1.25ex}{*}}}

\newcommand{\ns}{\mathrel{\unlhd}}
\newcommand{\wh}[1]{\widehat{#1}}
\newcommand{\whh}[1]{\widehat{\widehat{\vphantom{#1}#1}}}
\newcommand{\wt}[1]{\widetilde{#1}}
\newcommand{\floor}[1]{\left\lfloor#1\right\rfloor}
\newcommand{\abs}[1]{\left|#1\right|}
\newcommand{\ds}{\displaystyle}
\newcommand{\nn}{\nonumber}
\newcommand{\im}{\textup{im }}
\renewcommand{\ker}{\textup{ker }}
\newcommand{\Gal}[1]{\textup{Gal}(#1)}
\renewcommand{\Im}{\textup{Im }}
\renewcommand{\Re}{\textup{Re }}
\renewcommand{\ni}{\noindent}
\renewcommand{\bar}{\overline}
\newcommand{\supp}{\textup{supp}}

\newcommand{\mattwo}[4]{
\begin{pmatrix} #1 & #2 \\ #3 & #4 \end{pmatrix}
}

\newcommand{\vtwo}[2]{
\begin{pmatrix} #1 \\ #2 \end{pmatrix}
}

\newsymbol\dnd 232D

\newcommand{\one}{{\rm 1\hspace*{-0.4ex} \rule{0.1ex}{1.52ex}\hspace*{0.2ex}}}

\renewcommand{\v}{\vec{v}}
\newcommand{\w}{\vec{w}}

\newcommand{\Fp}{\mathbb{F}_p}
\newcommand{\Fpx}{\mathbb{F}_p^\times}

\newcommand{\Z}{\mathbb Z}
\newcommand{\Q}{\mathbb Q}
\newcommand{\N}{\mathbb N}
\newcommand{\R}{\mathbb R}
\newcommand{\C}{\mathbb C}
\newcommand{\E}{\mathbb E}
\newcommand{\F}{\mathbb F}

\renewcommand{\a}{\alpha}
\renewcommand{\b}{\beta}
\renewcommand{\d}{\delta}
\newcommand{\e}{\epsilon}
\renewcommand{\t}{\theta}
\newcommand{\z}{\zeta}
\newcommand{\s}{\sigma}

\renewcommand{\l}{\ell}

\newcommand{\Lc}{\mathcal L}
\renewcommand{\Mc}{\mathcal M}
\newcommand{\Pc}{\mathcal P}
\newcommand{\Fc}{\mathcal F}
\newcommand{\Gc}{\mathcal G}
\newcommand{\Vc}{\mathcal V}

\renewcommand{\exp}[1]{e\!\left(#1\right)}

\newcommand{\ignore}[1]{}

\newcommand*\circled[1]{\tikz[baseline=(char.base)]{
            \node[shape=circle,draw,inner sep=2pt] (char) {#1};}}

\newcommand*\squared[1]{\tikz[baseline=(char.base)]{
            \node[shape=rectangle,draw,inner sep=2pt] (char) {#1};}}

\title{A converse to a theorem of Gauss on Gauss sums}

\author{Jonathan W.\ Bober}
\address{\hspace{-0.3in} Heilbronn Institute for Mathematical Research, School of Mathematics \\
University of Bristol \\
Bristol, UK}
\email{j.bober@bristol.ac.uk}

\author{Leo Goldmakher}
\address{\hspace{-0.3in} Dept of Mathematics and Statistics \\
Williams College \\
Williamstown, MA, USA  01267}
\email{Leo.Goldmakher@williams.edu}

\date{}


\begin{abstract}
Gauss proved that if $f$ is the quadratic character (mod $p$) then the magnitude of its Gauss sum is precisely $\sqrt{p}$. We prove a converse to this: if $f : \Fpx \to \C^\times$ has image $\{\pm 1\}$ and its Gauss sum has magnitude $\sqrt{p}$, we show that $f$ must be either the Legendre symbol (mod $p$) or its negation.
In fact, we prove a much more general result: under some mild hypotheses, $f$ is a nontrivial Dirichlet character (mod $p$) if and only if the Fourier transform of $f$ has magnitude $1$ somewhere in $\Fpx$.
This leads to a number of applications, with the common theme that extremal behavior on the Fourier side imposes multiplicative structure on the physical side.
\end{abstract}

\maketitle
\thispagestyle{empty}

\numberwithin{equation}{section}

\section{Introduction}

Consider an arbitrary function $f : \Fpx \to \C^\times$.
A famous result (essentially due to Gauss) is that the quantity
\[
\tau(f) :=
\sum_{n \in \F_p^\times}
f(n) \
\exp{\frac{n}{p}}
\]
has magnitude $\sqrt{p}$ whenever $f$ is a nontrivial character mod $p$ (here and throughout, $e(\a) := e^{2 \pi i \a}$). Does the converse hold? Our initial goal is to prove that it does, under some mild assumptions about $f$.
In the special case that $f$ produces a string of $\pm 1$'s, our approach gives a particularly clean result:
\begin{prop}\label{prop:QuadraticMainThm}
Suppose the image of $f : \Fpx \to \C^\times$ is $\{\pm 1\}$, and that $f(1) = 1$.
Then $f$ is a character (namely, the Legendre symbol) if and only if $|\tau(f)| = \sqrt{p}$.
\end{prop}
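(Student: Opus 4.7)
The forward direction is Gauss's classical theorem. For the converse, I assume $f : \Fpx \to \{\pm 1\}$ with $f(1) = 1$ and $|\tau(f)|^2 = p$, and aim to force $f = \chi$, the Legendre symbol.

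My plan is to work inside the cyclotomic ring $\Z[\zeta_p]$. The identity $\tau(f)\overline{\tau(f)} = p$ is Galois-stable, so applying each automorphism $\sigma_a : \zeta_p \mapsto \zeta_p^a$ (noting that complex conjugation on $\Q(\zeta_p)$ is $\sigma_{-1}$) yields $|\sigma_a(\tau(f))|^2 = p$ for every $a \in \Fpx$. This computes the absolute norm of the ideal $(\tau(f))$ as $p^{(p-1)/2}$. Since $p$ is totally ramified in $\Z[\zeta_p]$ with unique prime $\mathfrak p = (1-\zeta_p)$ above $p$, the factorization is forced to be $(\tau(f)) = \mathfrak p^{(p-1)/2}$; and the same argument applied to $\chi$ gives $(\tau(\chi)) = \mathfrak p^{(p-1)/2}$. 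Hence $u := \tau(f)/\tau(\chi)$ is a unit in $\Z[\zeta_p]^\times$, all of whose Galois conjugates have modulus $1$. By Kronecker's theorem on heights, $u$ must be a root of unity, and in $\Q(\zeta_p)$ for odd prime $p$ these are exactly $\{\pm \zeta_p^j : 0 \leq j < p\}$. So $\tau(f) = \epsilon \zeta_p^j \tau(\chi)$ for some $\epsilon \in \{\pm 1\}$ and some $j \in \{0, 1, \ldots, p-1\}$.

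The final step is to run Fourier inversion on $\F_p$ to recover $f$ from this identity. Using the standard identities $\sigma_k(\tau(\chi)) = \chi(k) \tau(\chi)$ for $k \in \Fpx$ and $\tau(\chi)^2 = \chi(-1) p$, a direct calculation gives $f(n) = \epsilon \chi(n-j)$ when $n \neq j$, and $f(j) = 0$ when $j \in \Fpx$. The hypothesis that $f$ is $\pm 1$-valued rules out the latter, forcing $j = 0$; then $f = \epsilon \chi$, and the normalization $f(1) = 1$ yields $\epsilon = 1$, so $f = \chi$ as required.

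I expect the main obstacle to be clean bookkeeping of this Fourier inversion — in particular, handling uniformly the two cases $p \equiv 1 \pmod 4$ (where $\tau(\chi) = \sqrt p$ is real) and $p \equiv 3 \pmod 4$ (where $\tau(\chi) = i\sqrt p$ is not). I have chosen the ideal-theoretic route precisely because it sidesteps this dichotomy: only $|\tau(\chi)|^2 = p$ is ever used, never the specific value of $\tau(\chi)$. A more Fourier-analytic alternative, which might be what the paper does, would first upgrade the hypothesis to $|\wh{f}(k)|^2 = p$ for all $k \in \Fpx$ (via the same Galois argument), deduce that the autocorrelation $\sum_n f(n) f(n+j)$ equals $-1$ for every $j \neq 0$, and then argue combinatorially that $f^{-1}(1)$ must be the set of quadratic residues — but this combinatorial endgame seems no easier than the Kronecker route.
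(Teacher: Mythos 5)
Your proposal is correct, but it takes a genuinely different route from the paper's. The paper obtains Proposition \ref{prop:QuadraticMainThm} as the $n=2$ case of Proposition \ref{prop:ConverseToGauss}: expanding $|\tau(f)|^2=p$ shows that the autocorrelation $g(k)=\sum_{\l} f(\l+k)\overline{f(\l)}$ has Gauss sum $1$, Lemma \ref{lem:RatlGaussSum} (a minimal-polynomial argument for $\zeta_p$ over $\Q(\zeta_n)$) forces $g(k)=-1$ for every $k\in\Fpx$, and Kurlberg's Theorem \ref{thm:Kurlberg} is then invoked as a black box to conclude that $f$ is a character. You instead factor the ideal $(\tau(f))$ in $\Z[\zeta_p]$, apply Kronecker's theorem to the unit $\tau(f)/\tau(\chi)$ (with $\chi$ the Legendre symbol) to get $\tau(f)=\pm\zeta_p^{j}\tau(\chi)$, and then recover $f$ by hitting this identity with the automorphisms $\sigma_a$ and inverting the Fourier transform. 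Your argument is self-contained --- it makes no appeal to Kurlberg and in effect reproves the $\pm1$-valued case of Cohn's problem --- which is a real advantage for this proposition; the trade-off is that it exploits the rationality of the values $\pm1$ (so that $\tau(f)\in\Z[\zeta_p]$ and the $\sigma_a$ act simply), whereas the paper's route extends with no change to values in arbitrary $n^{\text{th}}$ roots of unity with $p\nmid n$ (Theorem \ref{thm:FourierCoeff}), where your method would have to work in $\Q(\zeta_{np})$ with more care. One bookkeeping point to supply in your final step: Fourier inversion actually yields $f(n)=\tfrac{1}{p}\sum_m f(m)+\epsilon\chi(n-j)$ for $n\neq j$, so you need $\sum_m f(m)=0$ before landing on the formulas you state; this is immediate either from integrality of $f(n)-\epsilon\chi(n-j)$ together with $\big|\sum_m f(m)\big|\leq p-1$, or from Parseval once the Galois argument gives $|\wh{f}(a)|=1$ for all $a\in\Fpx$. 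With that noted, your endgame ($f(j)=0$ forces $j=0$, and $f(1)=1$ forces $\epsilon=1$) is complete.
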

\noindent
Thus we can verify an algebraic condition (that $f$ is a homomorphism) via an analytic one (that $f$ correlates in a special way with exponentials).

There are two obvious paths to generalizing Proposition \ref{prop:QuadraticMainThm}. The first is to weaken the hypotheses, but we cannot be too cavalier---for $f$ to be a character it is necessary that $f(1) = 1$ and that the image of $f$ consists of roots of unity.
The second natural generalization is to interpret the quantity $\tau(f)$ in terms of the finite Fourier transform of $f$, and to attempt to replace our condition on $\tau(f)$ by a condition on $\wh{f}$.
More precisely, given any $f : \Fpx \to \C^\times$ we define its Fourier transform $\wh{f} : \F_p \to \C$ by
\[
\wh{f}(\xi) :=
\frac{1}{\sqrt{p}}
    \sum_{x \in \F_p} f(x) e\Big(-\frac{x\xi}{p}\Big) ,
\]
where $f(0):= 0$.
In this language, the conclusion of Proposition \ref{prop:QuadraticMainThm} asserts that $f$ is the Legendre symbol (mod $p$) if and only if $|\wh{f}(-1)| = 1$.
Does this hold if we replace $-1$ by some other input?
Our main theorem, which generalizes Proposition \ref{prop:QuadraticMainThm} in both the ways we just described, answers this question in the affirmative:

\begin{theorem}\label{thm:FourierCoeff}
Suppose the image of $f : \F_p^\times \to \C^\times$ consists of $n^\text{th}$ roots of unity, that $f(1) = 1$, and that $p \nmid n$.
Then $f$ is a nontrivial character if and only if $\exists a \in \Fpx$ such that $|\wh{f}(a)| = 1$.
\end{theorem}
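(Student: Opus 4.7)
The easy direction is classical: if $\chi$ is a nontrivial character, then $\wh\chi(a) = \bar\chi(-a)\tau(\chi)/\sqrt p$ combined with $|\tau(\chi)| = \sqrt p$ gives $|\wh\chi(a)| = 1$ for every $a \in \Fpx$.

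For the converse, assume $|\wh f(a_0)| = 1$ for some $a_0 \in \Fpx$. I would first use Galois symmetry to propagate this to every $a \in \Fpx$. Since $f$ takes values in $\mu_n$, all Fourier coefficients lie in $\Q(\zeta_p, \zeta_n)$; for $t \in \Fpx$ the automorphism $\sigma_t \in \mathrm{Gal}(\Q(\zeta_p,\zeta_n)/\Q(\zeta_n))$ sending $\zeta_p \mapsto \zeta_p^t$ maps $\wh f(a)$ to a unit-modulus multiple of $\wh f(at)$. Because $|\wh f(a_0)|^2 = 1$ lies in $\Q$ and is $\sigma_t$-invariant, we conclude $|\wh f(b)| = 1$ for every $b \in \Fpx$; Parseval's identity then forces $\wh f(0) = 0$, i.e.\ $\sum_x f(x) = 0$.

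The heart of the argument is identifying $\alpha_a := \sqrt p\,\wh f(a)$ with a Gauss sum, up to a root of unity. It is an algebraic integer in $\Z[\zeta_p, \zeta_n]$ satisfying $\alpha_a \bar\alpha_a = p$, with every conjugate of modulus $\sqrt p$; for any nontrivial character $\chi$ of $\Fpx$ of order dividing $n$, the Gauss sum $\tau(\chi)$ has the same property. Using that $p \nmid n$ (so $p$ is unramified in $\Z[\zeta_n]$) one analyzes the factorization of $(p)$ in $\Z[\zeta_p,\zeta_n]$ and shows $(\alpha_a) = (\tau(\chi))$ as principal ideals for a suitably chosen $\chi$. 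Then $\alpha_a/\tau(\chi)$ is a unit of modulus $1$ in every archimedean embedding, hence a root of unity by Kronecker's theorem. This algebraic identification is the step I expect to be the main obstacle, since it requires a careful study of the splitting of $p$ in the possibly large ring $\Z[\zeta_p,\zeta_n]$.

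Galois coherence then yields $\sqrt p\,\wh f(b) = \zeta_p^{kb}\,\xi\,\bar\chi(b)\,\tau(\chi)$ for every $b \in \Fpx$, for some $k \in \F_p$ and some root of unity $\xi$ of order coprime to $p$; Fourier inversion produces the explicit formula $f(u) = C[\chi(k+u) - \chi(k)]$ on $\Fpx$, where $C$ is a root of unity. If $k = 0$ this collapses to $f = C\chi$, and $f(1) = 1$ pins down $f = \chi$. If $k \neq 0$, the requirement $|f(u)| = 1$ forces $\chi(1 + u/k) \in \{\zeta_6, \zeta_6^{-1}\}$ for every $u \in \Fpx \setminus \{-k\}$; but a nontrivial character equidistributes over its cyclic image, so this possibility is ruled out by a short case-check on $\mathrm{ord}(\chi) \in \{1, 2, 3, 6\}$. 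Hence $k = 0$ and $f$ is the character $\chi$.
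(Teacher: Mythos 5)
Your opening move is correct and, in fact, is a nice observation in its own right: since $p\nmid n$, the automorphism $\sigma_t$ of $\Q(\zeta_p,\zeta_n)$ fixing $\Q(\zeta_n)$ with $\zeta_p\mapsto\zeta_p^t$ sends both $\sqrt p\,\wh f(a)=\sum_x f(x)\zeta_p^{-xa}$ and $\sum_x f(x)^{-1}\zeta_p^{xa}=\overline{\sqrt p\,\wh f(a)}$ to the corresponding expressions at $at$, so the algebraic identity $\bigl(\sqrt p\,\wh f(a_0)\bigr)\overline{\bigl(\sqrt p\,\wh f(a_0)\bigr)}=p$ propagates to every $b\in\Fpx$; this gives a direct proof of the paper's Corollary 1.3, which the authors remark they could only obtain as a consequence of the theorem. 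But the step you yourself flag as the heart --- that $(\alpha_a)=(\tau(\chi))$ as ideals for a suitable character $\chi$, followed by Kronecker --- is a genuine gap, for two reasons. First, the only valuation-theoretic information you have is $\alpha_a\overline{\alpha_a}=p$, i.e.\ $v_{\mathfrak P}(\alpha_a)+v_{\overline{\mathfrak P}}(\alpha_a)=p-1$ at each prime $\mathfrak P$ of $\Z[\zeta_p,\zeta_n]$ above $p$; this leaves a large family of admissible exponent patterns, whereas the ideals $(\tau(\chi))$ are the very special ones dictated by Stickelberger's theorem, and nothing in your setup forces the Stickelberger pattern. Second, even granting the ideal identity, Kronecker's theorem requires $\left|\sigma\!\left(\alpha_a/\tau(\chi)\right)\right|=1$ for \emph{all} embeddings $\sigma$, including those moving $\zeta_n$: such a $\sigma$ sends $\alpha_a$ to $\sqrt p$ times a Fourier coefficient of the twisted function $x\mapsto\sigma(f(x))$, and the hypotheses give no control on its modulus. (For a character this is automatic because $\chi^s$ is again a nontrivial character, but assuming it for general $f$ is essentially assuming what is to be proved.) So your "every conjugate of modulus $\sqrt p$" claim is unjustified, and the Weil-number-to-Gauss-sum identification does not follow from an analysis of the splitting of $p$ alone.

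The paper sidesteps all of this with a much softer argument, and your Galois propagation plugs into it directly: taking $b=-1$ you already have $|\tau(f)|=\sqrt p$ (the paper instead uses a change of variables $m\mapsto -\overline a m$ to reach the same point). From $|\tau(f)|^2=p$ one expands the square to find that the autocorrelation $g(k)=\sum_{\l}f(\l+k)\overline{f(\l)}$ satisfies $\tau(g)=1$ with $g$ valued in $\Q(\zeta_n)$; since $p\nmid n$, the minimal polynomial of $\zeta_p$ over $\Q(\zeta_n)$ is $1+x+\cdots+x^{p-1}$, which forces $g(k)=-1$ for all $k\in\Fpx$, and Kurlberg's theorem (the autocorrelation characterization of characters, Theorem 1.7 of the paper) then shows $f$ is a character; $f(1)=1$ removes the constant. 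If you replace your ideal-factorization/Kronecker heart with this autocorrelation step, your argument closes; as a small further remark, once you know $\wh f(0)=0$ your Fourier-inversion endgame simplifies, since the inversion formula yields $f(u)=C\chi(u+k)$ for $u\neq-k$ and $f(-k)=0$, which contradicts $f$ being $\C^\times$-valued unless $k=0$, so the $\zeta_6$ case-check is not needed.
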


\begin{rmk}
It was previously known that if $f : \Fpx \to \{\pm 1\}$, $f(0)=0$, and $|\wh{f}(a)| = 1$ for \emph{every} $a \in \Fpx$, then $f = \pm \leg{\cdot}{p}$; this has appeared explicitly as exercise \textbf{11} (iv) in Green's course on additive combinatorics \cite{Gree09}, but it also follows instantly from Theorem 1.1 of Borwein-Choi-Yazdani's work \cite{BCY} on Fekete polynomials, or by combining Lemma 1 in Kurlberg's paper \cite{Kurl02} with the main theorem of Bir\'o \cite{Biro} or Kurlberg \cite{Kurl02} on autocorrelations.
The primary interest of our result is that it shows that sampling $\wh f$ at a \emph{single} point is sufficient to determine whether or not $f$ is a character.
\end{rmk}

\begin{rmk}
The work of Borwein-Choi-Yazdani \cite{BCY} mentioned above is morally the closest to ours. Rewritten in our language, it asserts that if $N$ is odd and $f : \{1,2,\ldots,N-1\} \to \{\pm 1\}$ satisfies $\max\limits_{a \mod N} |\wh f(a)| = 1$, then $N$ must be prime and $\pm f$ must be the Legendre symbol mod $N$.
This illustrates a theme running through our present work: extremal behavior of the Fourier transform induces multiplicative structure on the physical side.
However, their result still requires sampling $\wh f$ at all points of $\Z/N\Z$. For example, for $N=15$ consider
\[
f(n) :=
\begin{cases}
1 & \mbox{if } 3 \mid n \text{ and } 5 \nmid n \\
-1 & \mbox{if } 3 \nmid n \text{ and } 5 \mid n \\
\leg{n}{15} & \mbox{otherwise},
\end{cases}
\]
Then $f$ isn't a character, even though $|\tau(f)| = \sqrt{15}$ (and more generally,
$|\wh f(a)| = 1$ for all $a \in (\Z/15\Z)^\times$).
\end{rmk}

\begin{rmk}
The hypothesis $p \nmid n$ in Theorem \ref{thm:FourierCoeff}
cannot be dropped, for a couple of reasons.
First, when $p \mid n$ it's possible for a non-character to have Gauss sum of magnitude $\sqrt{p}$; for example, taking $p = 3$ and setting $f(1) = 1$ and $f(2) = \exp{\frac{5}{6}}$ yields $|\tau(f)| = \sqrt{3}$.
(There are similar constructions for every odd prime $p$; see Proposition \ref{prop:FakeChar} below.)
Second, if $n$ is chosen to be minimal in Theorem \ref{thm:FourierCoeff}, then $p \mid n$ implies that $f$ is not a character.
To see this, observe that if $f$ were a character then $n$ would be the order of $f$ as an element of the dual group $\wh{\Fpx}$. This would imply $n \mid p-1$, whence $p \nmid n$.
\end{rmk}

\begin{rmk}
Recently, Benoist \cite{Beno24} used tools from symplectic geometry to show that the hypothesis that the image of $f$ consists of roots of unity in Theorem \ref{thm:FourierCoeff} cannot be dropped: for every $p \geq 11$ he constructs non-characters $f : \Fp \to \C$ such that $f(1)=1$, $f(0)=0 = \wh{f}(0)$, and $|f(a)| = 1 = |\wh{f}(a)|$ for all $a \neq 0$.
\end{rmk}

\subsection{Consequences}

Theorem \ref{thm:FourierCoeff} (and the ideas contained in our proof of it) implies a few results we haven't found in the literature.
Our first application of Theorem \ref{thm:FourierCoeff} is to the behavior of characters sampled along a function.
Given any nontrivial character $\chi \mod p$ and a function $P : \Fpx \to \Fpx$, consider
\[
\wh{\chi \circ P}(a) := \frac{1}{\sqrt{p}}
\sum_{x \in \Fpx} \chi\big(P(x)\big) e\Big(\!\!-\frac{ax}{p}\Big) .
\]
Whenever $P$ is injective, it's an exercise to prove that
\[
\max_{a \in \Fpx} |\wh{\chi \circ P}(a)| \geq 1 .
\]
We know that equality is achieved when $P$ is the identity map. For which other choices of $P$ does equality hold?
We shall prove that if $\chi$ is primitive (i.e.\ has order $p-1$), then equality is attained above if and only if $P(x) = cx^k$ for some $c \not\equiv 0 \mod p$ and some $k$ coprime to $p-1$.
More generally, in section \ref{sect:CharSumAlongFunction} we'll show:

\begin{cor}\label{cor:CharSumAlongFunction}
Given $P : \Fpx \to \Fpx$ arbitrary, $Q$ a permutation of $\Fpx$, and $\chi \mod p$ a nontrivial Dirichlet character such that
\[
\bigg| \sum_{x \in \Fpx} \chi\big(P(x)\big) e\Big(\frac{Q(x)}{p}\Big) \bigg| = \sqrt{p} .
\]
Then there exists $c \in \Fpx$ and $k \in \{1,2,\ldots,p-2\}$ such that
\(\displaystyle
\chi\Big( \frac{P(x)}{c \, Q(x)^k} \Big) = 1 .
\)
\end{cor}

\noindent
In the case that $P$ and $Q$ are polynomials, sums of the above form are classical. For example,
whenever $\chi$ is a nontrivial character (mod $p$),
$P \in \F_p[x]$ is nonconstant and squarefree,
and $Q \in \F_p[x]$ is nonconstant with $p \nmid \deg Q$, the Riemann Hypothesis for curves over finite fields implies
\begin{equation}\label{eq:KatzWeilBound}
\Big|
\sum_{x \in \F_p} \chi\big(P(x)\big) e\Big(\frac{Q(x)}{p}\Big)
\Big|
\leq (\deg P + \deg Q - 1) \sqrt{p} .
\end{equation}
(This appears as the final displayed equation in Weil's paper \cite{Weil48}.)
Our circle of ideas allows us to characterize when equality holds in Weil's bound (see section \ref{sect:ExtremeWeilBd}), a result which is surely well-known to experts but which we have not been able to find in the literature:

\begin{prop}\label{prop:ExtremeWeilBd}
Suppose equality holds in the Weil bound \eqref{eq:KatzWeilBound}. Then both $P$ and $Q$ are linear, and
\[
\sum_{x \in \F_p} \chi\big(P(x)\big) e\Big(\frac{Q(x)}{p}\Big)
= \chi\Big(\frac{P'(0)}{Q'(0)}\Big) e\Big(\frac{Q(r)}{p}\Big) \tau(\chi)
\]
where $r$ is the root of $P$.
\end{prop}

A different direction for our ideas is best phrased in the language of additive combinatorics, where finite Fourier transforms play an important role in indicating hidden structure inside of a given set.
Before describing our application, we set some notation.
Given a set $A \subseteq \Fpx$, we abuse notation and use $A$ to also denote its indicator function, i.e.\
\[
A(x) :=
\begin{cases}
1 & \mbox{if } x \in A \\
0 & \mbox{otherwise}.
\end{cases}
\]
Given $f : X \to \C$ we'll denote the expectation of $f$ by
\[
\E_X[f] := \frac{1}{|X|} \sum_{x \in X} f(x) ,
\]
so that for example the density of $A$ is $\d := \E_{\Fpx}[A]$.
Viewing $A$ as a subset of $\Fp$ implies $A(0)=0$, whence Fourier inversion tells us the mean of $\wh A$ over $\Fpx$:
\[
\E_{\Fpx} [ \wh A] =
-\frac{\d}{\sqrt{p}} .
\]
Parseval gives us the variance:
\[
\textup{Var}[\wh A]
= \E_{\Fpx}[ |\wh A|^2] - \frac{\d^2}{p}
= \d (1-\d) .
\]
To study extreme values of $\wh A$ we wish to maximize the variance, which happens at $\d = \frac{1}{2}$; for sets $A$ with this density we have $\E[\wh A] = -\frac{1}{2 \sqrt{p}}$ and $\textup{Var}[ \wh A] = \frac{1}{4}$.
This suggests renormalizing our random variable to $\frac{1}{\sqrt{p}} + 2 \wh A(a)$,
which (for $A$ of density $\frac{1}{2}$) has mean 0 and variance 1.
Thus whenever $|A| = \frac{p-1}{2}$ we have
\[
\max_{a \in \Fpx} \left| \frac{1}{\sqrt{p}} + 2 \wh{A}(a) \right|
\geq 1 .
\]
In section \ref{sect:CharacterizeQuadResid} we'll show that the choice of $A$ that minimizes the above has strong multiplicative structure:

\begin{prop}\label{prop:CharacterizeQuadResid}
Given an arbitrary set $A \subseteq \Fpx$. If there exists $a \neq 0$ such that
\(\displaystyle
\Big| \frac{1}{\sqrt{p}} +2 \wh{A}(a) \Big|
= 1
\)
then $A$ is either the set of quadratic residues or the set of quadratic nonresidues.
\end{prop}

All our results have concerned complete sums. One amusing geometric consequence of our main results is that an incomplete (nontrivial) Gauss sum can never be orthogonal to its complement. To make this precise, we require a bit of notation.
We will measure orthogonality via the standard real inner product on $\C$:
\[
z \perp w
\qquad \iff \qquad
\langle z,w \rangle := \Re z \overline{w} = 0 .
\]
As before, we denote the indicator function of $A$ by $A(x)$. In section \ref{sect:IncompleteOrthogonality} we'll prove

\begin{prop}\label{prop:IncompleteOrthogonality}
Suppose $\chi\mod p$ is a nontrivial Dirichlet character and that $A$ is a nonempty proper subset of $\Fpx$.
Then $\wh{\chi A}(a) \perp \wh{\chi A^c}(a)$ for some $a \in \Fpx$ if and only if $\chi$ is not quadratic and one of $A$ or $A^c$ is the set of all quadratic residues.
\end{prop}

\begin{cor}
For any $H \in [1,p-2]$ and any nontrivial Dirichlet character $\chi \mod p$, the incomplete Gauss sums
\[
\sum_{n \leq H} \chi(n) e\Big(\frac{an}{p}\Big)
\qquad \text{and} \qquad
\sum_{H < n \leq p-1} \chi(n) e\Big(\frac{an}{p}\Big)
\]
are not orthogonal for any $a \in \Fpx$.
\end{cor}

\begin{rmk}
In the case that $a=0$ it \emph{is} possible for the sum and its complement to be orthogonal, but only in the degenerate case where both are actually 0.
\end{rmk}

\begin{proof}[Proof of Corollary]
Let $A$ be the set of positive integers $\leq H$, and suppose for some nonzero $a$ the two incomplete Gauss sums were orthogonal. By Proposition \ref{prop:IncompleteOrthogonality}, $A$ must be the set of all quadratic residues (since it contains $1$). But this would imply $H = \frac{p-1}{2}$, which is impossible: if $p = 3$ the only nontrivial character is quadratic, and if $p \geq 5$ then $2$ and $\overline{2} = \frac{p+1}{2}$ are either both quadratic residues or both quadratic nonresidues.
\end{proof}

Thus far, all of our results start with an exact evaluation on the Fourier side and deduce multiplicative structure on the physical side.
It turns out we can deduce this type of conclusion even if we relax the hypothesis.
For ease of reference, we isolate the main technical hypothesis from Theorem \ref{thm:FourierCoeff}: given $f : \Fpx \to \C^\times$ we will typically assume that
\begin{equation}\label{eq:Hypotheses}
f(\Fpx) \subseteq \mu_n
\qquad \text{and} \qquad
p \nmid n ,
\end{equation}
where $\mu_n$ denotes the set of all $n^\text{th}$ roots of unity.
In section \ref{sect:FourierCoeff} we'll use some basic algebraic number theory to rewrite Theorem \ref{thm:FourierCoeff} in a more surprising form:

\begin{cor}\label{cor:FourierCoeff}
Suppose $f : \F_p^\times \to \C^\times$ satisfies \eqref{eq:Hypotheses} and $f(1) = 1$.
Then $f$ is a nontrivial character if and only if $\exists a \in \Fpx$ such that $|\wh{f}(a)| \in \Q$.
\end{cor}

\begin{rmk}
Our method of proof shows one can replace $\Q$ by a larger field, e.g.\ $\Q(\z_m)$ for any $m\not\equiv 0 \mod p$.
\end{rmk}
\noindent
It's a classical fact that $|\wh{f}(a)|=1$ for all nonzero $a$ whenever $f$ is a character (mod $p$). Thus our results imply the following curious dichotomy that doesn't explicitly mention characters:

\begin{cor}
Suppose $f : \F_p^\times \to \C^\times$ satisfies \eqref{eq:Hypotheses}.
Then
\[
|\wh{f}(a)| \notin \Q \quad \forall a \in \Fpx
\qquad \text{or} \qquad
|\wh{f}(a)| = 1 \quad \forall a \in \Fpx .
\]
\end{cor}

Our final application demonstrates that our ideas are relevant in situations where we only have access to the average behavior of the Fourier transform, rather than to exact magnitudes at any particular frequency.
Recall that the squared $L^2$ norm of $\wh{f}$ is $p-1$, so we expect that $|\wh{f}(a)| \approx 1$ on average over $\Fpx$.
In section \ref{sect:BlurredFourierCharacter} we'll show that a precise version of this heuristic imposes strong multiplicative structure on $f$:

\begin{prop}\label{prop:BlurredFourierCharacter}
Given $f : \Fp \to \C$ satisfying $f(0) = 0$ and the usual hypotheses \eqref{eq:Hypotheses}.
Pick any $K : \Fp \to \C$ such that $\wh{K}(a) \neq 0$ for all $a \neq 0$.
If
\[
\sum_{a \in \Fp} K(a+b) |\wh{f}(a)|^2
= \sum_{a \in \Fpx} K(a+b)
\]
for all $b \in \Fp$ then $f = \epsilon \chi$ for some $\epsilon \in \mu_n$ and some nontrivial character $\chi \mod p$.
\end{prop}

\begin{rmk}
Note that the sum on the left hand side runs over all of $\Fp$, while the sum on the right only runs over $\Fpx$. Conceptually this is because $f$ is properly viewed as a function on $\Fpx$, while $\wh f$ is a function on $\Fp$.
\end{rmk}

\begin{rmk}
At first glance, it may appear that the hypothesis---knowledge about the typical behavior of $|\wh f|$---is significantly weaker than our previous hypotheses. The key difference is that we must sample these averages at a large number of points (all $b \in \Fp$), rather than at a single point. Put differently, we trade a single perfectly sharp image for a large number of blurred images.
\end{rmk}

\noindent
Our proof of Proposition \ref{prop:BlurredFourierCharacter} is rather robust, and readily admits modifications. For example, we may replace the condition that $\wh K(a) \neq 0$ for all $a \neq 0$ by the weaker condition that at least one of $\wh K(a),\wh K(-a)$ is nonzero for all $a \neq 0$, and still draw the same conclusion.

\subsection{Other characterizations of characters}

We conclude our introduction with a brief survey of some other characterizations of characters.
Recall one of the standard definitions: a function $f : \Z \to \C$ (or $\N \to \C$) is a Dirichlet character (mod $q$) if and only if
\begin{itemize}
\item $f$ is completely multiplicative,
\item $f$ is periodic with period $q$, and
\item the support of $f$ is the set of integers coprime to $q$.
\end{itemize}
It turns out there are various ways to modify these hypotheses without including any other functions.
One example of this is the following result due to Allouche and the second author \cite{AlloGold18}, which built on previous work of S\'{a}rk\"{o}zy \cite{Sark78}, Heppner-Maxsein \cite{HeppMaxs85}, and Methfessel \cite{Meth94}:

\begin{prop}[Allouche-Goldmakher]
A function $f : \Z \to \C$ is a Dirichlet character if and only if $f$ is completely multiplicative, eventually satisfies a linear recurrence, and has support strictly larger than $\{\pm 1\}$.
\end{prop}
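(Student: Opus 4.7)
The direction ``Dirichlet character $\Rightarrow$ three conditions'' is immediate: a Dirichlet character mod $q$ is completely multiplicative by definition, satisfies the linear recurrence $f(n+q)=f(n)$, and has support $\{n : \gcd(n,q)=1\}$, which is infinite and hence properly exceeds $\{\pm 1\}$. For the reverse direction, assume $f$ satisfies the three hypotheses. My plan is to first show that $f$ is eventually periodic, and then invoke the classical fact that any completely multiplicative, eventually periodic function with support strictly containing $\{\pm 1\}$ is a Dirichlet character.

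\emph{Step 1 (zero set).} Apply the Skolem--Mahler--Lech theorem to the linear recurrence: the zero set $Z := \{n : f(n) = 0\}$ is, beyond finitely many exceptions, a finite union of arithmetic progressions. Complete multiplicativity forces $Z$ to be closed under multiplication by arbitrary integers, so $Z$ is determined by the set $S := \{p \text{ prime} : f(p) = 0\}$ via $Z = \{n : p \mid n \text{ for some } p \in S\}$. Reconciling the multiplicative description with the arithmetic-progression description forces $S$ to be finite, since an infinite $S$ would require $Z$ to contain $p\Z$ for infinitely many primes $p$, incompatible with being a finite union of APs. Setting $q := \prod_{p \in S} p$, the zero set $Z$ is exactly the set of integers not coprime to $q$.

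\emph{Step 2 (periodicity).} On the complement of $Z$, solutions to the recurrence take the form $f(n) = \sum_i Q_i(n)\,\alpha_i^n$ for complex $\alpha_i$ and polynomials $Q_i$. Combining this with complete multiplicativity and the rigid geometric growth $|f(p^k)| = |f(p)|^k$ at primes $p \notin S$ should force each nonzero $\alpha_i$ to be a root of unity and each $Q_i$ to be constant, which then yields eventual periodicity of $f$. \emph{Step 3 (conclude).} A completely multiplicative eventually periodic function is in fact periodic (run the recurrence backwards, using that $f(1) = 1$ is forced by complete multiplicativity once $f \not\equiv 0$), and descends to a completely multiplicative function on $\Z/q'\Z$ for the period $q'$; factoring out the zero set gives a character of $(\Z/q'\Z)^\times$, hence a Dirichlet character mod $q'$.

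The main obstacle lies in Step 2: the exponential-polynomial structure of general recurrence solutions must be fully reconciled with the rigid multiplicative structure at prime powers, specifically ruling out characteristic roots off the unit circle and polynomial prefactors of positive degree. The support hypothesis is essential here, as it excludes degenerate examples---such as $f$ supported only on $\{\pm 1\}$---which trivially satisfy the other two conditions without being Dirichlet characters; it should enter Step 2 by guaranteeing enough non-vanishing values to pin down the $\alpha_i$ and $Q_i$ unambiguously.
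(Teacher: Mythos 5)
First, a point of context: the paper does not prove this proposition --- it is quoted as background from \cite{AlloGold18} (building on S\'{a}rk\"{o}zy and Methfessel) --- so there is no in-paper argument to compare yours against. Your forward direction is fine, and Step 1 is essentially sound, though note that the support hypothesis is already needed there, not only in Step 2: to rule out the possibility that the residue classes covering the Skolem--Mahler--Lech zero set swallow a full class containing $1$, you want some $n_0 \notin \{0,\pm 1\}$ with $f(n_0) \neq 0$, so that the powers $n_0^j$ supply infinitely many nonvanishing values in some fixed residue class.

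The genuine gap is Step 2, and it is not merely technical: the conclusion you are trying to reach there is false under the stated hypotheses. The function $f(n) = n$ (more generally $f(n) = n^k$ for $k \geq 1$) is completely multiplicative, satisfies the linear recurrence $f(n+2) = 2f(n+1) - f(n)$, and has support $\Z \setminus \{0\}$, yet is not a Dirichlet character; in the exponential--polynomial form $\sum_i Q_i(n)\alpha_i^n$ it corresponds to $\alpha = 1$ with $Q(n) = n^k$, so no argument can force the $Q_i$ to be constant. What the multiplicative rigidity at prime powers actually yields is S\'{a}rk\"{o}zy's theorem: $f(n) = n^k g(n)$ for all large $n$, with $k \geq 0$ an integer and $g$ periodic, hence (using complete multiplicativity and the support condition) $f(n) = n^k\chi(n)$ for some Dirichlet character $\chi$. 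To conclude that $f$ is literally a character you need an additional hypothesis that kills the $n^k$ factor --- e.g.\ that $f$ is bounded, or takes finitely many values, or takes values in the roots of unity --- none of which appears in the statement as transcribed here. So either the statement should carry such a hypothesis (or the weaker conclusion $f(n) = n^k\chi(n)$), or your Step 2 must be abandoned; as written it cannot be completed.
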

\noindent
There are also a number of related characterizations that use the notion of automaticity in place of linear recurrences; see \cite{KlurKurl20} and \cite{Koni20} for a representative example.

A different genre of characterization, discovered recently by Konieczny \cite{Koni24}, starts with the observation that for any Dirichlet character $\chi$ and any integer $a \geq 0$, the function $\chi(n) n^a$ agrees with the outputs of a \emph{generalized polynomial}, i.e.\
any function that can be built out of polynomials, addition, multiplication, and the floor function.

\begin{theorem}[Konieczny]
Suppose $f : \N \to \C$ is a completely multiplicative function that coincides with a generalized polynomial, and that $f$ has support strictly larger than $\{1\}$. Then there exists a Dirichlet character $\chi$ and an integer $a \geq 0$ such that $f(n) = \chi(n) n^a$.
\end{theorem}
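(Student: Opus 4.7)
The plan is to combine the algebraic rigidity of complete multiplicativity with the dynamical rigidity of generalized polynomials, reducing first to the bounded case and then extracting periodicity. Since every generalized polynomial has polynomial growth, there is an integer $D$ with $|f(n)| \ll n^D$. Complete multiplicativity then forces $|f(p)| \leq p^D$ at each prime $p$, and a careful analysis of the infimum of admissible exponents should yield a nonnegative real $\alpha$ with $|f(n)|$ comparable to $n^\alpha$ on its support. The first nontrivial point is that $\alpha$ must be an integer: the function $n^\alpha$ agrees with a generalized polynomial only when $\alpha \in \Z_{\geq 0}$, and this rigidity should pin down the exponent $a := \alpha$.

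Setting $g(n) := f(n)/n^a$ on the support of $f$, we obtain a completely multiplicative function of modulus $0$ or $1$ that still reflects generalized-polynomial structure (since multiplication by $n^a$ is a generalized-polynomial operation). The central task is to prove that $g$ is periodic. For this I would invoke the Bergelson--Leibman structure theorem, which realizes bounded generalized polynomials as values of a Riemann-integrable function along a polynomial orbit on a nilmanifold $G/\Gamma$, together with Leibman's theorem describing the closure of such an orbit as a finite union of sub-nilmanifolds. Complete multiplicativity $g(mn) = g(m)g(n)$ is an extraordinarily restrictive functional equation to impose on a nilsequence; the hope is that combined with the Diophantine conditions governing the orbit, it collapses the orbit closure to a finite set, which in turn forces $g$ to be periodic.

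Once periodicity of $g$ is in hand, a classical argument finishes the proof: a completely multiplicative $g : \N \to \C$ that is periodic with period $q$ and takes values in $\{0\} \cup \{\text{roots of unity}\}$ must vanish on integers not coprime to $q$ and descend to a homomorphism on $(\Z/q\Z)^\times$, i.e., a Dirichlet character $\chi$; combined with the normalization, this yields $f(n) = \chi(n) n^a$. I expect the main obstacle to be the periodicity step, since generic nilsequences are very far from periodic. Making the multiplicative functional equation interact productively with the nilmanifold dynamics --- perhaps by first proving rigidity at prime arguments and bootstrapping to arbitrary $n$ --- will be the crux. I would also anticipate that the unbounded case is handled by first establishing the theorem for bounded $f$ (the $a=0$ case) and then reducing the general case via the normalization above.
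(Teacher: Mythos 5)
First, a point of orientation: the paper you are commenting on does not prove this statement at all --- it is quoted from Konieczny's preprint \cite{Koni24} as part of a survey of characterizations of characters, so there is no in-paper argument to compare yours against. Judged on its own terms, your proposal is a reasonable research plan that names the right tools (Bergelson--Leibman's representation of bounded generalized polynomials on nilmanifolds, Leibman's orbit-closure theorem, and the classical fact that a periodic completely multiplicative function with unimodular nonzero values is essentially a Dirichlet character), but it is not a proof: the crux is conceded rather than established. You write that ``the hope is that'' complete multiplicativity, combined with the Diophantine data of the orbit, collapses the orbit closure to a finite set and hence forces periodicity of $g$. That collapse \emph{is} the theorem; everything before and after it is routine by comparison, and no mechanism is offered for how the functional equation $g(mn)=g(m)g(n)$ interacts with the nilmanifold dynamics. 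As stated, the argument has a hole exactly where the difficulty lies.

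There are also gaps in the reduction to the bounded case. Complete multiplicativity together with the bound $|f(n)|\ll n^{D}$ does \emph{not} by itself produce a single exponent $\alpha$ with $|f(n)|\asymp n^{\alpha}$ on the support: the completely multiplicative function determined by $f(2)=2$ and $f(p)=1$ for all odd primes $p$ satisfies $|f(n)|\le n$ but is comparable to no fixed power of $n$. So your ``infimum of admissible exponents'' step must itself lean on the generalized-polynomial hypothesis, and you have not said how. Likewise, the claim that $g(n)=f(n)/n^{a}$ ``still reflects generalized-polynomial structure'' is asserted, not proved --- the class of generalized polynomials is not obviously closed under division by $n^{a}$, and the passage from $f$ to a \emph{bounded} generalized polynomial (which is what the Bergelson--Leibman machinery requires) needs a genuine argument. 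Finally, the closing classical step should be stated more carefully: a periodic completely multiplicative function need not vanish off the residues coprime to its period (the constant function $1$ has period $4$), so the correct conclusion is that $g$ is a Dirichlet character to a possibly smaller modulus; this is minor, but worth fixing. In short, the skeleton is sensible, but the load-bearing steps --- integrality of the exponent, transfer of generalized-polynomial structure to $g$, and above all the periodicity of $g$ --- are missing.
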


A third type of characterization is in terms of the asymptotic behavior of the mean value.
Orthogonality implies that for any Dirichlet character $f$,
\begin{equation}\label{eq:MeanValueKM}
\sum_{n \leq x} f(n) = \a x + O(1)
\end{equation}
for all large $x$ (note that $\a=0$ if and only if $f$ is nontrivial).
Chudakov conjectured the converse should hold, and this was proved by Glazkov \cite{Glaz68} in the case that $\a \neq 0$ and by Klurman-Mangerel \cite{KlurMang18} in the case that $\a = 0$:

\begin{theorem}[Glazkov, Klurman-Mangerel]
Suppose $f : \N \to \C$ is a completely multiplicative function whose image is finite and whose support contains all but finitely many primes.
If \eqref{eq:MeanValueKM} holds for some $\a \in \C$,
then $f$ must be a Dirichlet character.
\end{theorem}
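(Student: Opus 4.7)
The plan is to study $f$ via its Dirichlet series $F(s) := \sum_{n \geq 1} f(n)\, n^{-s}$. By partial summation, the hypothesis \eqref{eq:MeanValueKM} shows that $F(s) - \frac{\alpha}{s-1}$ extends to an analytic function on $\Re(s) > 0$, so $F$ is meromorphic on this half-plane with at most a simple pole at $s=1$ of residue $\alpha$. Since $f$ is completely multiplicative with finite image, for $\Re(s) > 1$ we have the Euler product $F(s) = \prod_p (1 - f(p)p^{-s})^{-1}$, and the values $f(p)$ on primes in the support are roots of unity of bounded order, say all dividing some fixed $N$. Letting $q_0$ denote the product of the finitely many primes outside $\supp f$, the pointwise power $f^N$ coincides with the principal character $\chi_0 \pmod{q_0}$; this already encodes a good deal of the arithmetic structure of $f$, and our job is to lift it to $f$ itself.

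I would then split into two cases matching the Glazkov and Klurman-Mangerel regimes. In the case $\alpha \neq 0$, the analytic input is a genuine pole: the ratio $F(s)/\zeta(s)$ extends continuously to $\Re(s) \geq 1$ and factors as the Euler product $\prod_p \frac{1 - p^{-s}}{1 - f(p)\, p^{-s}}$. A Wiener--Ikehara-type or elementary Tauberian argument, combined with the boundedness of $f$, then forces $f(p) = 1$ for all sufficiently large primes $p$. The structural observation above promotes this to $f = \chi_0 \pmod{q_0}$, finishing the case.

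The case $\alpha = 0$ is where the real work lies, and here I would proceed pretentiously. The decay $\sum_{n \leq x} f(n) = O(1)$ is dramatically stronger than $o(x)$, so Hal\'asz's theorem forces $f$ to be pretentiously close to a twisted character $\chi(n)\, n^{it}$; since $f$ takes values in a finite set of roots of unity, the archimedean twist must satisfy $t = 0$, leaving a candidate Dirichlet character $\chi$ modulo some $q_1$. The plan is then to upgrade pretentious closeness to exact equality by exploiting the finite image and cofinite prime support: any prime $p$ on which $f(p) \neq \chi(p)$ is forced to lie among finitely many candidates (essentially the primes dividing $q_0 q_1$), after which one shows that a ``wrong'' value at any such prime inflates the partial sums beyond $O(1)$, contradicting the hypothesis.

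The main obstacle is precisely this last upgrade step in the $\alpha = 0$ regime. Hal\'asz-type inputs give only \emph{pretentious} information, whereas the theorem demands \emph{pointwise} equality $f = \chi$. Turning a single disagreement at a prime into a genuine obstruction to $\sum_{n \leq x} f(n) = O(1)$ requires sharp lower bounds for mean values of pretentious multiplicative functions and repeated use of the pretentious triangle inequality, together with a careful treatment of possible cancellations across residue classes mod $q_0 q_1$---the technical heart of the Klurman--Mangerel machinery cited in the excerpt.
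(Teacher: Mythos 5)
First, a caveat about the comparison: the paper does not prove this theorem. It is stated as part of a survey of character characterizations and attributed, with citations, to Glazkov (the case $\a\neq 0$) and Klurman--Mangerel (the case $\a=0$); there is no in-paper proof to measure your sketch against. Judged on its own, your proposal has the right opening moves (the Dirichlet series, the observation that finite image plus complete multiplicativity forces $f(p)$ to be a bounded-order root of unity or zero, the case split on $\a$), but it contains a genuine logical gap at the step you lean on hardest. In the $\a=0$ case you write that since $\sum_{n\le x}f(n)=O(1)$ is ``dramatically stronger than $o(x)$,'' Hal\'asz's theorem forces $f$ to pretend to be $\chi(n)n^{it}$. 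This runs Hal\'asz backwards. Hal\'asz's theorem is an \emph{upper bound}: it says that if $f$ is far from every $\chi(n)n^{it}$ then its mean value is small. Its contrapositive extracts structure only from a \emph{large} mean value. Here the mean value is as small as it could possibly be, so Hal\'asz yields no information at all; indeed, generic non-pretentious completely multiplicative functions also have $o(x)$ partial sums. Deducing that $O(1)$ partial sums force $f$ to coincide with a character is precisely the content of the Klurman--Mangerel theorem (resting on the resolution of the Erd\H{o}s-type bounded-partial-sums problem and on correlation/progression estimates), and your sketch defers exactly that content to ``the Klurman--Mangerel machinery.'' So the $\a=0$ case is not proved, even in outline.

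There is a secondary gap in the $\a\neq 0$ case. The Tauberian comparison of $F(s)$ with $\zeta(s)$ gives, at best, $\sum_{p:\,f(p)\neq 1}1/p<\infty$ (using that $\Re(1-f(p))$ is bounded below on the primes where $f(p)\neq 1$, since the values are roots of unity of bounded order); this does not by itself yield $f(p)=1$ for all sufficiently large $p$, since a sparse infinite set of primes can have convergent reciprocal sum. And even granting $f(p)=1$ outside a finite set $S$, the claim that ``the structural observation promotes this to $f=\chi_0$'' skips the essential use of the $O(1)$ error term: the function with $f(2)=-1$ and $f(p)=1$ for all odd $p$ has $f^2=\one$, a nonzero mean value, and satisfies every hypothesis except that its partial sums are $\a x+O(\log x)$ rather than $\a x+O(1)$. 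Ruling out nontrivial roots of unity at the exceptional primes requires an argument that a single such value inflates the error term, which is missing. In short, the skeleton is reasonable but both limbs need the actual load-bearing arguments, and the tool named for the harder case is the wrong one.
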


All three of the above results, as well as almost all other characterizations of characters we've seen, assume from the outset that $f$ is multiplicative.
Apart from our main theorem (Theorem \ref{thm:FourierCoeff}), the only\footnote{A very recent preprint by Hiary and Saraeb \cite{Hiary} deduces multiplicativity from a functional equation.} other characterization we're aware of that \emph{deduces} multiplicativity from other hypotheses is the following result due to Kurlberg \cite{Kurl02}:

\begin{theorem}[Kurlberg] \label{thm:Kurlberg}
Suppose $f : \F_p \to \C$ satisfies $f(0) = 0$ and $f(1) = 1$, and that the image of $\Fpx$ under $f$ consists of roots of unity. Then $f$ is a nontrivial multiplicative character on $\F_p$ if and only if
\[
\sum_{x \in \F_p} f(x) \overline{f(x+h)} =
\begin{cases}
-1 & \mbox{if } h \neq 0 \\
p-1 & \mbox{otherwise.}
\end{cases}
\]
\end{theorem}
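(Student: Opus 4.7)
The ``only if'' direction is a direct computation. For $h = 0$, the sum equals $\sum_{x \in \Fpx} |f(x)|^2 = p - 1$ since $|f(x)| = 1$. For $h \in \Fpx$, the substitution $u = x(x+h)^{-1}$ defines a bijection $\Fpx \setminus \{-h\} \to \Fpx \setminus \{1\}$, under which multiplicativity yields $f(x)\overline{f(x+h)} = f(u)$. The sum then telescopes: $\sum_{u \in \Fpx,\, u \neq 1} f(u) = -f(1) = -1$, using that a nontrivial character sums to $0$ over $\Fpx$.

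For the ``if'' direction, my plan is to use Plancherel to extract information about $\wh f$ from the autocorrelation condition, and then invoke Theorem \ref{thm:FourierCoeff}. Set $R(h) := \sum_{x \in \F_p} f(x)\overline{f(x+h)}$. Expanding $\wh R$ and swapping the order of summation (with the inner change of variable $y = x + h$) produces the identity
\[
\wh R(\xi) = \sqrt{p}\,|\wh f(-\xi)|^2.
\]
On the other hand, computing $\wh R$ directly from the hypothesized values of $R$ yields $\wh R(0) = 0$ and $\wh R(\xi) = \sqrt{p}$ for $\xi \in \Fpx$. Equating the two expressions for $\wh R$ gives $\wh f(0) = 0$ and $|\wh f(a)| = 1$ for every $a \in \Fpx$. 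Applying Theorem \ref{thm:FourierCoeff} (to any single such $a$) then yields that $f$ is a nontrivial character.

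The main technical obstacle is verifying the hypothesis ``$p \nmid n$'' required by Theorem \ref{thm:FourierCoeff}. Kurlberg's hypothesis only guarantees that the image of $f$ consists of roots of unity, without constraining the order; the minimal $n$ with image $\subseteq \mu_n$ could a priori be divisible by $p$. However, the condition derived above---namely $|\wh f(a)| = 1$ for \emph{every} $a \in \Fpx$---is strictly stronger than the single-coefficient hypothesis of Theorem \ref{thm:FourierCoeff}, and this added strength should be enough to exclude the case $p \mid n$. For instance, the non-character example in the Remark at $p = 3$ satisfies $|\wh f(-1)| = 1$ but $|\wh f(1)| = 0$, so it is not a counterexample once we demand magnitude $1$ at every $a$. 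Formalizing this intuition---likely via an algebraic argument tracking prime ideals of $\Z[\zeta_n]$ above $p$ and exploiting the ramification of $p$ when $p \mid n$, or alternatively by checking directly that no non-character $f$ with image in $\mu_n$ (for $p\mid n$) can satisfy the sharper Fourier-flat condition---should close the gap and allow a clean invocation of Theorem \ref{thm:FourierCoeff}.
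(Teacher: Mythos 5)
Your ``only if'' computation is fine (modulo one small point: the trivial character satisfies all the stated hypotheses but gives autocorrelation $p-2$ for $h\neq 0$, so that direction really requires $f$ to be a \emph{nontrivial} character, which your appeal to ``a nontrivial character sums to $0$'' silently assumes). The serious problem is with the ``if'' direction: this theorem is not proved in the paper at all --- it is imported from Kurlberg \cite{Kurl02} and used as the key ingredient in the proof of Proposition \ref{prop:ConverseToGauss}, which feeds Corollary \ref{cor:ConverseToGauss} and then Theorem \ref{thm:FourierCoeff}. So deducing Theorem \ref{thm:Kurlberg} from Theorem \ref{thm:FourierCoeff}, as you propose, is circular in the context of this paper: the statement you are trying to prove sits strictly upstream of the theorem you invoke.

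Even setting the circularity aside, your argument is incomplete in exactly the place you flag. The Plancherel step is correct ($\wh R(\xi) = \sqrt{p}\,|\wh f(-\xi)|^2$, hence $\wh f(0)=0$ and $|\wh f(a)|=1$ for all $a\in\Fpx$), but Kurlberg's hypothesis puts no bound on the order of the roots of unity in the image of $f$, so the minimal $n$ may well be divisible by $p$, and Theorem \ref{thm:FourierCoeff} simply does not apply in that case. That excluded case --- values involving $p$-power roots of unity --- is precisely the delicate part of Kurlberg's theorem (note the Remark after Theorem \ref{thm:FourierCoeff}: when $p\mid n$ the single-coefficient statement genuinely fails, so something beyond the paper's machinery is needed even with the stronger ``magnitude $1$ at every $a$'' condition). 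Your closing suggestion --- ``an algebraic argument tracking prime ideals of $\Z[\zeta_n]$ above $p$ \ldots should close the gap'' --- is a hope, not a proof; Kurlberg's actual argument in \cite{Kurl02} handles arbitrary roots of unity by a different algebraic route that is not reproduced here and is not recovered by your sketch. As it stands, the proposal proves the theorem only under the extra assumption $p\nmid n$, and only by a route that the paper's logical structure forbids.
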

\noindent
In other words, the behavior of a character (mod $p$) is completely determined by its autocorrelations. This result was motivated by a question of Harvey Cohn from the 1990s, who asked whether this happens more generally.
It turns out that the natural analogue of Kurlberg's result in other finite fields fails to hold, as demonstrated by Choi and Siu; for any odd $p$ and any $k \geq 2$, Theorem 3 of \cite{ChoiSiu00} produces a non-character $f : \F_{p^k}^\times \to \{\pm 1\}$ that satisfies the analogous autocorrelation condition.

In the next section, we shall make crucial use of Kurlberg's theorem \ref{thm:Kurlberg} (with the additional restriction that $p \nmid n$) to prove Theorem \ref{thm:FourierCoeff}.
Indeed, Theorem \ref{thm:FourierCoeff} is equivalent to this restricted version of Kurlberg's result.
To see this, suppose $f : \F_p \to \C$ satisfies $f(0) = 0$ and $f(1) = 1$, and that its image consists of $n^\text{th}$ roots of unity with $p \nmid n$. If
\[
\sum_{x \in \F_p} f(x) \overline{f(x+h)} =
\begin{cases}
-1 & \mbox{if } h \neq 0 \\
p-1 & \mbox{otherwise,}
\end{cases}
\]
then expanding $|\tau(f)|^2$ as a double sum, re-indexing, and plugging in the above autocorrelation identity yields $|\tau(f)|^2 = p$, whence (by Theorem \ref{thm:FourierCoeff}) $f$ must be a nontrivial character.

\section{Proof of Theorem \ref{thm:FourierCoeff}}

Abusing nomenclature, we call the quantity $\tau(f)$ the \emph{Gauss sum} of $f$.
Our goal is to show that the value of the Gauss sum of a function carries a remarkable amount of information about the behavior of the function itself.
As a warm up, observe that if $g(x) = -1$ for all $x \in \Fpx$, then $\tau(g) = 1$; a bit of playing around shows that this constant function is the \emph{only} function $\Fpx \to \{\pm 1\}$ that has Gauss sum $1$.
This phenomenon generalizes to functions whose Gauss sums live in any finite abelian extension of $\Q$:

\begin{lemma}\label{lem:RatlGaussSum}
Suppose $g : \F_p^\times \to \Q(\zeta_n)$ such that $p \nmid n$ and $\tau(g) \in \Q(\zeta_n)$.
Then $g(a) = -\tau(g)$ for all $a \in \Fpx$.
\end{lemma}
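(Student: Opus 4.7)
The natural approach is Galois-theoretic. The hypothesis $p \nmid n$ makes $\Q(\zeta_n)$ and $\Q(\zeta_p)$ linearly disjoint over $\Q$, so the Galois group $\mathrm{Gal}(\Q(\zeta_{np})/\Q(\zeta_n))$ is isomorphic to $(\Z/p\Z)^\times$, with each element acting as $\sigma_k : \zeta_p \mapsto \zeta_p^k$ while fixing $\zeta_n$. Since $\tau(g) \in \Q(\zeta_n)$ by assumption, each $\sigma_k$ fixes $\tau(g)$, and since every value $g(m)$ lies in $\Q(\zeta_n)$, the action passes cleanly through the sum.

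Concretely, I would apply $\sigma_k$ to $\tau(g) = \sum_{m \in \Fpx} g(m)\,\zeta_p^m$ and substitute $m' = km$ to obtain
\[
\tau(g) \;=\; \sigma_k(\tau(g)) \;=\; \sum_{m \in \Fpx} g(m)\,\zeta_p^{km} \;=\; \sum_{m' \in \Fpx} g(k^{-1}m')\,\zeta_p^{m'}.
\]
Subtracting the original expression for $\tau(g)$ yields
\[
\sum_{m' \in \Fpx} \bigl(g(k^{-1}m') - g(m')\bigr)\,\zeta_p^{m'} \;=\; 0
\]
for every $k \in \Fpx$, with coefficients lying in $\Q(\zeta_n)$.

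The crucial step is to invoke linear independence: because $\gcd(n,p)=1$, we have $[\Q(\zeta_{np}):\Q(\zeta_n)] = p-1$, and the set $\{\zeta_p, \zeta_p^2, \ldots, \zeta_p^{p-1}\}$ forms a $\Q(\zeta_n)$-basis of $\Q(\zeta_{np})$ (equivalently, the minimal polynomial $\Phi_p$ of $\zeta_p$ remains irreducible over $\Q(\zeta_n)$). This is the one place where the hypothesis $p \nmid n$ gets used, and is the main technical point of the argument; everything else is bookkeeping. It forces each coefficient $g(k^{-1}m') - g(m')$ to vanish, so $g$ is constant on $\Fpx$. Call its common value $c$.

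Finally, substituting back,
\[
\tau(g) \;=\; c \sum_{m \in \Fpx} \zeta_p^m \;=\; -c,
\]
using $\sum_{m \in \F_p} \zeta_p^m = 0$. Thus $g(a) = c = -\tau(g)$ for all $a \in \Fpx$, which is the claim.
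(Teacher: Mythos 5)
Your proof is correct, but it takes a different route from the paper's. The paper argues directly with polynomials: since $\tau(g)=\sum_{m} g(m)\zeta_p^m$, the polynomial $-\tau(g)+g(1)x+g(2)x^2+\cdots+g(p-1)x^{p-1}$, whose coefficients lie in $\Q(\zeta_n)$, vanishes at $\zeta_p$; because $p\nmid n$ the minimal polynomial of $\zeta_p$ over $\Q(\zeta_n)$ is $1+x+\cdots+x^{p-1}$, so this degree-$(p-1)$ polynomial must be a constant multiple of it, and equating coefficients yields $g(a)=-\tau(g)$ for all $a$ in a single stroke. You instead use the Galois action of $\mathrm{Gal}(\Q(\zeta_{np})/\Q(\zeta_n))\cong(\Z/p\Z)^\times$: invariance of $\tau(g)$ under each $\sigma_k$, combined with the $\Q(\zeta_n)$-linear independence of $\zeta_p,\ldots,\zeta_p^{p-1}$, first shows that $g$ is constant, and then $\sum_{m\in\Fpx}\zeta_p^m=-1$ pins the constant to $-\tau(g)$. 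Both arguments rest on the same key input, namely $[\Q(\zeta_n,\zeta_p):\Q(\zeta_n)]=p-1$ when $p\nmid n$, which you correctly identify as the one place coprimality is used. Your version splits the conclusion into ``$g$ is constant'' plus a separate evaluation, which makes the multiplicative symmetry $m\mapsto km$ visible (echoing the change-of-variables exploited later in the paper's main proof) at the cost of a slightly longer argument; the paper's divisibility argument is shorter and delivers the constant term $-\tau(g)$ for free along with the other coefficients.
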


\begin{proof}
Note that the polynomial
\[
m(x) := -\tau(g) + g(1) x + g(2) x^2 + \cdots + g(p-1)x^{p-1}
\]
has $\zeta_p$ as a root.
This means $m(x)$ is a multiple of the minimal polynomial of $\zeta_p$ over $\Q(\zeta_n)$, which (since $p \nmid n$) is $1+x+\cdots+x^{p-1}$.
The claim instantly follows.
\end{proof}

\noindent
Our proof of Theorem \ref{thm:FourierCoeff} proceeds in several stages of increasing generality.
We start by proving a converse to Gauss' theorem on Gauss sums.

\begin{prop}\label{prop:ConverseToGauss}
Suppose the image of $f : \F_p^\times \to \C^\times$ consists of $n^\text{th}$ roots of unity, that $f(1) = 1$, and that $p \nmid n$.
Then $f$ is a nontrivial character if and only if $|\tau(f)| = \sqrt{p}$.
\end{prop}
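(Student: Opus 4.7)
The forward direction is Gauss's classical theorem, so the work is in the converse. Assuming $|\tau(f)|^2 = p$, I would try to feed a suitably constructed auxiliary function into Lemma~\ref{lem:RatlGaussSum}. The natural candidate is the autocorrelation
\[
A(z) \,:=\, \sum_{x \in \Fpx \setminus \{-z\}} f(x)\,\overline{f(x+z)} \qquad (z \in \Fpx),
\]
whose values lie in $\Q(\zeta_n)$ because the image of $f$ lies in the group of $n^\text{th}$ roots of unity.

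The first step is to expand $|\tau(f)|^2 = \tau(f)\overline{\tau(f)}$ as a double sum over $\Fpx \times \Fpx$ and group the terms by the difference $z := y - x$. The diagonal $z = 0$ contributes $p - 1$, and the off-diagonal contribution is $\sum_{z \in \Fpx} A(z)\, e(-z/p)$, so the hypothesis $|\tau(f)|^2 = p$ rearranges to $\sum_{z \in \Fpx} A(z)\, e(-z/p) = 1$. A direct check shows $\overline{A(z)} = A(-z)$ (swap the roles of $x$ and $x+z$), so substituting $z \mapsto -z$ recasts the identity as $\tau(\overline{A}) = 1$.

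Now $\overline{A} \colon \Fpx \to \Q(\zeta_n)$ and $\tau(\overline{A}) = 1 \in \Q(\zeta_n)$, so Lemma~\ref{lem:RatlGaussSum} applies and forces $\overline{A(z)} = -1$, hence $A(z) = -1$, for every $z \in \Fpx$. Extending $f$ to $\F_p$ by $f(0) := 0$, this is precisely the autocorrelation identity in the hypothesis of Kurlberg's Theorem~\ref{thm:Kurlberg}: the sum $\sum_{x \in \F_p} f(x)\,\overline{f(x+h)}$ equals $-1$ for $h \neq 0$ and $p-1$ for $h = 0$. Invoking that result, $f$ must be a multiplicative character, and it is nontrivial because the trivial character has Gauss sum of magnitude $1$ rather than $\sqrt{p}$.

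The main obstacle I foresee is exactly spotting the right auxiliary object: once one sees that the autocorrelation admits a Gauss sum landing in $\Q(\zeta_n)$, Lemma~\ref{lem:RatlGaussSum} automatically promotes the single equation $|\tau(f)|^2 = p$ into the full Kurlberg-type autocorrelation identity, after which the character conclusion is delivered by Theorem~\ref{thm:Kurlberg}.
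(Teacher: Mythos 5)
Your proposal is correct and follows essentially the same route as the paper: expand $|\tau(f)|^2$ as a double sum, isolate the diagonal contribution $p-1$, recognize the off-diagonal part as a Gauss sum of the autocorrelation (your $\overline{A}$ is exactly the paper's $g(k) = \sum_{\l} f(\l+k)\overline{f(\l)}$), apply Lemma~\ref{lem:RatlGaussSum} to force it to equal $-1$, and conclude via Kurlberg's Theorem~\ref{thm:Kurlberg}. The only cosmetic difference is your extra conjugation/substitution step and the explicit remark on nontriviality, which the paper leaves implicit.
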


\begin{proof}
The forward direction is classical, so we henceforth assume $|\tau(f)| = \sqrt{p}$ and prove that $f$ is a character.
Extend $f$ to a function $\F_p \to \C$ by setting $f(0) := 0$, and observe that
\[
p
= \left| \sum_{\l \in \F_p} f(\l) \ \exp{\frac{\l}{p}} \right|^2
= \sum_{m,\l \in \F_p} f(m) \overline{f(\l)} \ \exp{\frac{m-\l}{p}}
= \sum_{k \in \F_p} \exp{\frac{k}{p}} \sum_{\l \in \F_p} f(\l+k) \overline{f(\l)} .
\]
The contribution from the $k=0$ term is $\sum\limits_{a \in \F_p} |f(a)|^2 = p-1$, whence
\[
\sum\limits_{k \in \Fpx}
\Bigg( \underbrace{\sum_{\l \in \F_p} f(\l+k) \overline{f(\l)}}_{g(k)} \Bigg) \exp{\frac{k}{p}} = 1 .
\]
Note that $g(k) \in \Q(\zeta_n)$ and $\tau(g) = 1$, so Lemma \ref{lem:RatlGaussSum} implies $g(k) = -1$ for all $k \in \Fpx$. Since $g(0) = p-1$, Kurlberg's theorem \ref{thm:Kurlberg} implies that $f$ must be a character.
\end{proof}

\noindent
In practice, it's helpful to relax the hypotheses by removing the condition that $f(1) = 1$. The result becomes simpler to state if we abuse notation and refer to a \emph{function} $h : \Fpx \to \C^\times$ as an $n^\text{th}$ root of unity if and only if everything in the image of $h$ is an $n^\text{th}$ root of unity.

\begin{cor}\label{cor:ConverseToGauss}
Suppose $f : \F_p^\times \to \C^\times$ is an $n^\text{th}$ root of unity, and that $p \nmid n$. Then $|\tau(f)| = \sqrt{p}$ if and only if $f = \epsilon \chi$ for some constant $\epsilon$ and some nontrivial character $\chi$ that are both $n^\text{th}$ roots of unity.
\end{cor}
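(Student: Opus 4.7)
The plan is to reduce Corollary \ref{cor:ConverseToGauss} to Proposition \ref{prop:ConverseToGauss} by a simple normalization. Set $\epsilon := f(1)$, which by hypothesis is an $n^\text{th}$ root of unity, and define $F : \Fpx \to \C^\times$ by $F(x) := \overline{\epsilon}\, f(x)$. Since the ratio of two $n^\text{th}$ roots of unity is again an $n^\text{th}$ root of unity, $F$ is still an $n^\text{th}$ root of unity in the sense of the corollary, and moreover $F(1) = 1$. Pulling the constant $\overline{\epsilon}$ out of the Gauss sum gives $\tau(F) = \overline{\epsilon}\, \tau(f)$, so $|\tau(F)| = |\tau(f)|$.

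For the ``only if'' direction, assume $|\tau(f)| = \sqrt{p}$. Then $|\tau(F)| = \sqrt{p}$ as well, and $F$ satisfies all the hypotheses of Proposition \ref{prop:ConverseToGauss}, so $F$ is a nontrivial character $\chi$. Its image lies in the $n^\text{th}$ roots of unity by construction, and $f = \epsilon \chi$ as desired. For the ``if'' direction, suppose $f = \epsilon \chi$ with $\chi$ a nontrivial character and $\epsilon$ an $n^\text{th}$ root of unity. Then $\tau(f) = \epsilon\, \tau(\chi)$, and the classical theorem of Gauss gives $|\tau(\chi)| = \sqrt{p}$, so $|\tau(f)| = |\epsilon|\sqrt{p} = \sqrt{p}$.

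There is essentially no obstacle here; the only point that requires a moment's care is verifying that the normalized function $F$ genuinely satisfies the hypotheses of Proposition \ref{prop:ConverseToGauss} (in particular, that its image still consists of $n^\text{th}$ roots of unity with the same $n$, so that the condition $p \nmid n$ is preserved verbatim). This is automatic since $\overline{\epsilon}$ is itself an $n^\text{th}$ root of unity.
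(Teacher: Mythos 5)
Your proof is correct and follows essentially the same route as the paper: normalize by $\overline{f(1)}$ so that the resulting function satisfies the hypotheses of Proposition \ref{prop:ConverseToGauss}, and handle the reverse direction by Gauss' classical theorem. The paper's own proof is just a terser version of exactly this argument.
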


\begin{proof}
The reverse direction immediately follows from Gauss' theorem, so we assume $|\tau(f)| = \sqrt{p}$.
The function $g := \overline{f(1)} f$ satisfies all the hypotheses of Proposition \ref{prop:ConverseToGauss}, so $g$ must be a nontrivial character.
\end{proof}

\noindent
Armed with these tools, we can now give a short proof of our main theorem.

\begin{proof}[Proof of Theorem \ref{thm:FourierCoeff}]
Recall our claim:
given $f : \F_p^\times \to \C^\times$ an $n^\text{th}$ root of unity such that $f(1) = 1$ and $p \nmid n$, we wish to prove
that $f$ is a nontrivial character if and only if $\exists a \in \Fpx$ such that $|\wh{f}(a)| = 1$.
As before, the forward direction is classical (take $a = -1$), so it suffices to prove the reverse direction. Pick $a \in \Fpx$ such that $|\wh{f}(a)| = 1$.
A change of variables yields
\[
\wh{f}(a)
= \frac{1}{\sqrt{p}}
    \sum_{m \in \Fpx} f(-\overline{a} m) \ e\Big(\frac{m}{p}\Big) ,
\]
where $\overline{a}$ denotes the multiplicative inverse of $a$ in $\Fpx$.
Corollary \ref{cor:ConverseToGauss} implies
\(
f(-\overline{a} m) = \epsilon \chi(m)
\)
for some $n^\text{th}$ roots of unity $\epsilon$ and $\chi$ (with the former a constant and the latter a nontrivial character).
This in turn implies
\[
f(\l) = \epsilon \chi(-a) \chi(\l) = \epsilon' \chi(\l)
\]
with $\epsilon'$ an $n^\text{th}$ root of unity.
Since $f(1) = 1 = \chi(1)$, we deduce $\epsilon' = 1$, and the claim is proved.
\end{proof}

\section{False characters} \label{sect:FalseCharacters}

In the introduction we observed that the function $f : \F_3 \to \C$ defined $f(0)=0$, $f(1) = 1$, and $f(2) = \exp{\frac{5}{6}}$ satisfies $|\tau(f)| = \sqrt{3}$, despite not being a character.
This is not an isolated example:

\begin{prop}\label{prop:FakeChar}
For every prime $p \geq 5$ there exists a function $f : \Fp \to \C$ such that $f(0)=0$, $f(1)=1$, $f(\Fpx) \subseteq \mu_{2p}$, and $|\wh{f}(a)| = 1$ for some nonzero $a$, but $f$ is not a character.
\end{prop}
\noindent
In other words, the condition $p \nmid n$ in the statement of Theorem \ref{thm:FourierCoeff} cannot be removed.
The construction given below, proposed by ChatGPT, satisfies the additional character-like property $\wh f(0)=0$.

\begin{proof}
Let $\chi$ denote the quadratic character mod $p$, and pick $b$ such that $\chi(b) = -1$. Set
\[
f(n) := \chi(n) e\bigg( \frac{(n-1)+b(\overline{n} - 1)}{p} \bigg)
\]
where $\overline{n}$ denotes the inverse of $n$ in $\Fpx$ and $\overline{0} := 0$.
A straightforward computation shows that $\wh f(a)$ can be rewritten as a Sali\'e sum multiplied by a normalizing factor; evaluating the Sali\'e sum (see e.g.\ \cite{IK}) shows that for any $a \in \Fp$,
\[
\wh f(a) =
\e \sum_{u^2 = 4b(1-a)} e\Big(\frac{u}{p}\Big)
\]
where
\(
\e :=
-\frac{\tau(\chi)}{\sqrt{p}} e\big(\!\!-\!\!\frac{b+1}{p}\big)
\)
is a root of unity. In particular,
\(
\wh f(0) = 0
\)
and
\(
|\wh f(1)| = 1 .
\)
However, $f$ cannot be a character:
$e\big( \frac{(n-1)+b(\overline{n} - 1)}{p} \big)$
is a nontrivial $p$-th root of unity for any $n \notin \{0,1,b\}$ so $f(n)$ has order divisible by $p$, but any character has order dividing $p-1$.
\end{proof}

\section{Proof of Corollary \ref{cor:CharSumAlongFunction}}
\label{sect:CharSumAlongFunction}

Recall that Corollary \ref{cor:CharSumAlongFunction} asserts that
if
\begin{equation}\label{eq:ExtremeCharSumAlongFunct}
\bigg| \sum_{x \in \Fpx} \chi\big(P(x)\big) e\Big(\frac{Q(x)}{p}\Big) \bigg| = \sqrt{p}
\end{equation}
for some nontrivial $\chi \mod p$,
some function $P : \Fpx \to \Fpx$, and
some permutation $Q$ of $\Fpx$,
then there exists $c \in \Fpx$ and $k \in \{1,2,\ldots,p-2\}$ such that
\begin{equation}\label{eq:KernelOfChi}
\chi\Big( \frac{P(x)}{c \, Q(x)^k} \Big) = 1 .
\end{equation}

\begin{proof}
Since $Q$ is a permutation and there are no restrictions on $P$, a change of variables allows us to assume that $Q(x)=x$, so \eqref{eq:ExtremeCharSumAlongFunct} takes the form
\[
|\tau(\chi \circ P)| =
\bigg| \sum_{x \in \Fpx} \chi\big(P(x)\big) e\Big(\frac{x}{p}\Big) \bigg| = \sqrt{p} .
\]
Corollary \ref{cor:ConverseToGauss} implies that
\[
\chi \circ P = \e \xi
\]
for some $\e \in \mu_d$ and some nontrivial character $\xi \mod p$ of order dividing $d$, where $d$ denotes the order of $\chi$.
Since the group of characters $\wh \Fpx$ is cyclic, the subgroup consisting of all characters of order dividing $d$ must be as well. In particular, $\xi = \chi^k$ for some $k \in \{1, 2, \ldots,d-1\}$.
Also, since $\chi$ has order $d$ there exists $c \in \Fpx$ such that $\e = \chi(c)$.
It follows that $\forall x \in \Fpx$,
\[
\chi \big(P (x)\big) = \chi (c x^k)
\]
or equivalently
\(
\chi\big( \frac{P(x)}{c \, x^k} \big) = 1 .
\)
\end{proof}

\section{Proof of Proposition \ref{prop:ExtremeWeilBd}}
\label{sect:ExtremeWeilBd}

Recall that we're assuming that equality holds in the Weil bound \eqref{eq:KatzWeilBound}, i.e.\
\[
\Big|
\sum_{x \in \F_p} \chi\big(P(x)\big) e\Big(\frac{Q(x)}{p}\Big)
\Big|
= (\deg P + \deg Q - 1) \sqrt{p}
\]
and we're trying to deduce that $P$ and $Q$ are both linear (and then evaluate the sum on the left hand side).

\begin{proof}
Let $C := \deg P + \deg Q -1$, and set
\[
S(a) :=
\sum_{x \in \F_p} \chi\big(P(x)\big) e\Big(\frac{a Q(x)}{p}\Big) .
\]
Thus, for example,
\[
|S(1)| = C \sqrt{p} .
\]
Our first goal is to show that $|S(a)| = C \sqrt{p}$ for all $a \in \Fpx$.
This step of the proof mimics Kurlberg's proof of Theorem \ref{thm:Kurlberg}.

Let $n$ denote the order of $\chi$ and consider the cyclotomic extensions $K := \Q(\z_n)$ and $L := K(\z_p)$, where we use the familiar notation $\z_m := e(\sfrac{1}{m})$.
We have $\Gal{L/K} \simeq \Fpx$, with each $a \in \Fpx$ corresponding to the automorphism $\s \in \Gal{L/K}$ that fixes $\z_n$ and maps $\z_p \mapsto \z_p^a$.
It follows that for every $a \in \Fpx$, the corresponding automorphism $\s \in \Gal{L/K}$ maps $S(1) \mapsto S(a)$.
Since $|S(1)|^2 \in \Q$ and $\s$ commutes with complex conjugation (as $\Gal{L/K}$ is abelian), we deduce that $|S(a)|^2 = |S(1)|^2 = C^2 p$, whence
\[
\phantom{\forall a \in \Fpx . \qquad}
|S(a)| = C \sqrt{p} \qquad \forall a \in \Fpx .
\]
The rest of the proof is most easily expressed in terms of the Fourier transform. First observe that
\[
\frac{1}{\sqrt{p}} S(a)
= \wh{F}(a)
\]
where
\[
F(t) := \sum_{\substack{x \in \Fp \\ Q(x) = -t}} \chi\big(P(x)\big) .
\]
Parseval implies
\[
\begin{split}
\frac{1}{p} \sum_{a \in \Fp} |S(a)|^2
&= \sum_{a \in \Fp} |\wh F(a)|^2
= \sum_{t \in \Fp} |F(t)|^2 \\
&\leq \sum_{t \in \Fp} \big|\{x \in \Fp : Q(x) = -t, P(x) \neq 0\}\big|^2 \\
&\leq (\deg Q) \sum_{t \in \Fp} \big|\{x \in \Fp : Q(x) = -t, P(x) \neq 0\}\big| \\
&\leq (\deg Q)p .
\end{split}
\]
On the other hand,
\[
\frac{1}{p} \sum_{a \in \Fp} |S(a)|^2
\geq \frac{1}{p} \sum_{a \in \Fpx} |S(a)|^2
= (p-1)C^2 .
\]
We deduce
\[
(\deg Q)^2
\leq C^2
\leq \frac{(\deg Q) p}{p-1} ,
\]
whence $\deg Q = 1$. Thus $C = \deg P$, which implies
\[
(\deg P)^2 = C^2 \leq \frac{p}{p-1}
\]
and we conclude that $\deg P = 1$ as well.

Because $P$ is linear we can write $P(x)=\a (x-r)$ for some $\a \in \Fpx$ and $r \in \Fp$. Since $Q$ is linear we can express it in the form
\(
Q(x)=\b (x-r) + Q(r)
\)
for some $\b \in \Fpx$.
It follows that
\[
\begin{split}
\sum_{x \in \Fp} \chi\big(P(x)\big) e\Big(\frac{Q(x)}{p}\Big)
&=
\chi(\a) e\Big(\frac{Q(r)}{p} \Big)
\sum_{y \in \Fp} \chi(y) e\Big(\frac{\b y}{p} \Big)
=
\chi(\a) e\Big(\frac{Q(r)}{p}\Big) \overline{\chi(\b)} \tau(\chi) \\
&=
\chi(\a \overline{\b})
e\Big(\frac{Q(r)}{p}\Big) \tau(\chi)
\end{split}
\]
as claimed.
\end{proof}

\section{Proof of Proposition \ref{prop:CharacterizeQuadResid}}
\label{sect:CharacterizeQuadResid}

The claim is vacuous if $A$ is empty or all of $\Fpx$, so we henceforth assume that $A$ is a nonempty proper subset of $\Fpx$.
Suppose that there exists $a_0 \in \Fpx$ such that
\[
\left|
\frac1{\sqrt p}+2\widehat A(a_0)
\right|
= 1 .
\]
Set $A^c := \Fpx \setminus A$ and consider the function
\[
B(x) := A(x)-A^c(x) =
\begin{cases}
0 & \mbox{if } x = 0 \\
1 & \mbox{if } x \in A \\
-1 & \mbox{if } x \in A^c .
\end{cases}
\]
Observe that for any $b \in \Fpx$ we have
\[
\wh B(b) =
\frac{1}{\sqrt{p}} \sum_{x \in \Fp} B(x) e(-bx/p)
= 2\frac{1}{\sqrt{p}} \sum_{x \in A} e(-bx/p) - \frac{1}{\sqrt{p}} \sum_{x \in \Fpx} e(-bx/p)
= 2 \wh A(b) + \frac{1}{\sqrt{p}} ,
\]
whence $|\wh B(a_0)| = 1$.
Since $A$ is proper and nonempty, the image of $B$ on $\Fpx$ is $\{\pm 1\}$, so corollary \ref{cor:ConverseToGauss} implies
\[
B = B(1) \chi
\]
for the quadratic character $\chi \mod p$.
If $B(1) = 1$ then $A$ is the set of quadratic residues; if $B(1) = -1$, $A$ is the set of quadratic nonresidues.
\hfill \qed

\section{Proof of Proposition \ref{prop:IncompleteOrthogonality}}
\label{sect:IncompleteOrthogonality}

Recall that we're given a nonempty proper subset $A \subset \Fpx$ and are trying to prove that
\begin{equation}\label{eq:OrthogHypothesis}
\wh{\chi A}(a) \perp \wh{\chi A^c}(a)
\end{equation}
for some $a \in \Fpx$
if and only if
$\chi$ is not quadratic and one of $A$ or $A^c$ is the set of all quadratic residues.

\begin{proof}
Observe that
\(
\wh{\chi A}(b) + \wh{\chi A^c}(b) = \wh \chi (b) ,
\)
whence
\[
|\wh{\chi A}(b) + \wh{\chi A^c}(b)| = 1
\]
for all $b \in \Fpx$.
On the other hand, we have
\[
\wh{\chi A} - \wh{\chi A^c}
= \wh{\chi B}
\]
where $B(x) := A(x) - A^c(x)$ as in the proof of Proposition \ref{prop:CharacterizeQuadResid}.
For any $b \in \Fpx$ we have
\[
4 \Big\langle \wh{\chi A}(b) , \wh{\chi A^c}(b) \Big\rangle
= |\wh{\chi A}(b) + \wh{\chi A^c}(b)|^2 - |\wh{\chi A}(b) - \wh{\chi A^c}(b)|^2
= 1 - |\wh{\chi B}(b)|^2 .
\]
It follows that
\begin{equation}\label{eq:InnerProdEquiv}
\wh{\chi A}(b) \perp \wh{\chi A^c}(b)
\qquad \iff \qquad
|\wh{\chi B}(b)| = 1 .
\end{equation}
Armed with this equivalence, the proof of our claim is relatively quick.

First, suppose there exists $a \in \Fpx$ such that \eqref{eq:OrthogHypothesis} holds. \eqref{eq:InnerProdEquiv} implies
$|\wh{\chi B}(a)| = 1$, whence
\(
B(1) \chi B
\)
is a nontrivial character (mod $p$) by Theorem \ref{thm:FourierCoeff}.
This in turn implies $B(1) B$ is a character (mod $p$).
Since $B(1) B$ is real-valued, it must either be the trivial character or the quadratic character. The former is impossible (because $A$ is assumed to be a nonempty proper subset of $\Fpx$), so
\[
B(x) = B(1) \leg{x}{p}
\]
for all $x \in \Fp$.
If $B(1) = 1$ this implies $A$ is the set of quadratic residues; if $B(1) = -1$ this implies $A^c$ is the set of quadratic residues.
Finally, note that $\chi$ cannot be quadratic: if it were, then $B(1) \chi B$ would be a constant function, contradicting our earlier deduction that it's a nontrivial character.

Now we prove the converse. Suppose $\chi$ is not the quadratic character and that $A$ or $A^c$ is the set of quadratic residues.
Then $B = \pm \leg{\cdot}{p}$, whence
\[
\chi B = \pm \leg{\cdot}{p} \chi .
\]
Since $\chi$ isn't quadratic, $\leg{\cdot}{p} \chi$ is a nontrivial character, whence
$|\wh {\chi B}(b)| = 1$ for all $b \in \Fpx$.
Applying \eqref{eq:InnerProdEquiv} concludes the proof.
\end{proof}

\section{Proof of Corollary \ref{cor:FourierCoeff}}
\label{sect:FourierCoeff}

Since the forward direction of the claim is immediate, we focus on the reverse: we'll prove that if there exists $a \in \Fpx$ such that
\[
|\wh f(a)| \in \Q
\]
(where $f$ satisfies the usual hypothesis \eqref{eq:Hypotheses}) then $f = \e \chi$ for some $\e \in \mu_n$ and some nontrivial $\chi \mod p$.

\begin{proof}
Fix $a \in \Fpx$ such that
\[
q := |\wh f(a)| \in \Q .
\]
As in the proof of Proposition \ref{prop:ExtremeWeilBd}, the key idea is to use basic algebraic number theory to deduce that $|\wh f|$ is constant on $\Fpx$.
Set
\[
\tau_a :=
\sum_{x \in \Fpx} f(x) e\Big(-\frac{ax}{p}\Big) .
\]
We can rewrite our hypothesis in the form
\(
|\tau_a| = q \sqrt{p} ,
\)
and it follows that
\begin{equation}\label{eq:NormTwistedGaussSum}
\tau_a \overline{\tau_a} = p q^2 .
\end{equation}
We immediately gain some nontrivial information from this. Since $\tau_a$ is an algebraic integer, the left hand side of \eqref{eq:NormTwistedGaussSum} must be as well, while the right hand side is rational, whence
\(
p q^2 \in \Z .
\)
This in turn implies $q \in \Z$. By Corollary \ref{cor:ConverseToGauss} it suffices to show that $q=1$.

Recall that each automorphism in $\Gal{L/K}$ (where $K = \Q(\z_n)$ and $L = K(\z_p)$) acts on $L$ by fixing $\z_n$ and mapping $\z_p \mapsto \z_p^b$ for some $b \in \Fpx$, and
that $\Gal{L/K} \simeq \Fpx$. Thus for each $b \in \Fpx$ there exists $\s \in \Gal{L/K}$ such that
\[
\s(\tau_a) = \tau_{ab} .
\]
Also, $\s$ commutes with complex conjugation, since $\Gal{L/\Q}$ is abelian. Applying $\s$ to both sides of \eqref{eq:NormTwistedGaussSum} yields
\(
\tau_{ab} \overline{\tau_{ab}} = p q^2
\)
for all $b \in \Fpx$, whence
\(
|\tau_c|^2 = p q^2
\)
for all $c \in \Fpx$.
It follows that
\[
|\wh f(c)| = q
\]
for all $c \in \Fpx$.
Parseval implies
\[
(p-1) q^2 \leq
\sum_{c \in \Fp} |\wh f(c)|^2 =
p-1 ,
\]
whence $q = 0$ or $1$. All that remains is to prove that $q$ is nonzero.

Suppose $q$ were zero. Then
\[
\tau(f) = \tau_{-1} = 0 ,
\]
and Lemma \ref{lem:RatlGaussSum} would imply $f(x) = -\tau(f) = 0$ for all $x \in \Fpx$, contradicting our hypotheses on $f$. Thus $q \neq 0$, and the claim is proved.
\end{proof}

\section{Proof of Proposition \ref{prop:BlurredFourierCharacter}}
\label{sect:BlurredFourierCharacter}

Recall that we're given some $f : \Fp \to \C$ satisfying $f(0) = 0$ and the usual hypotheses \eqref{eq:Hypotheses}, and some $K : \Fp \to \C$ satisfying $\wh{K}(a) \neq 0$ for all $a \neq 0$.
We're trying to prove that if
\begin{equation}\label{eq:Blur}
\sum_{a \in \Fp} K(a+b) |\wh{f}(a)|^2
= \sum_{a \in \Fpx} K(a+b)
\end{equation}
for all $b \in \Fp$ then $f = \epsilon \chi$ for some $\epsilon \in \mu_n$ and some nontrivial character $\chi \mod p$.

\begin{proof}
Let
\[
D(a) := |\wh f(a)|^2 - \chi_1(a)
\]
where $\chi_1$ denotes the trivial character (mod $p$).
Our hypothesis \eqref{eq:Blur} is equivalent to
\[
\sum_{a \in \F_p} K(a+b) D(a) = 0
\]
for every $b \in \Fp$, which we can rewrite in terms of additive convolutions:
\[
\whh{K} * D \equiv 0 .
\]
Applying the Fourier transform to both sides yields
\(
\wh K(-r) \wh D(r) = 0
\)
for all $r \in \Fp$. By our hypothesis on the nonvanishing of $\wh K$, we deduce
\[
\wh D(r) = 0
\]
for all nonzero $r$. We claim this also holds for $r=0$: Parseval implies
\[
\sum_{a \in \Fp} D(a) = \sum_{a \in \Fp} |\wh f(a)|^2 - (p-1) = 0 ,
\]
whence $\wh D(0)=0$. Thus $\wh D$ vanishes uniformly, whence $D \equiv 0$ as well. This implies $|\wh f(a)| = 1$ for every $a \in \Fpx$; applying Corollary \ref{cor:ConverseToGauss} yields the claim.
\end{proof}

\bigskip

\noindent
\textsc{Acknowledgements.}
We're grateful to Andrew Granville, Oleksiy Klurman, and Sacha Mangerel for their comments and encouragement, Ben Green for pointing out a relevant reference, Zoe Kane for some inspiring discussions, and ChatGPT for shooting down a number of our conjectures by constructing instructive counterexamples, most notably the one presented in Section \ref{sect:FalseCharacters}.
LG is grateful to the Centre de Recherche Math\'ematiques and the Simons Foundation for their support.

\end{document}